\newcommand{\EMDE}{Erd\H{o}s-Moser Diophantine equation}
\newcommand{\osn}[1]{\oldstylenums{#1}}
\theoremstyle{plain}
\newtheorem{thm}{Theorem}
\newtheorem{lem}{Lemma}
\newtheorem{cnj}{Conjecture}
\newtheorem{prop}{Proposition}
\title[Divisibility of Power Sums and the Generalized Erd\H{os}-Moser Equation]{Divisibility of Power Sums and\\ the Generalized Erd\H{os}-Moser Equation}
\author{Kieren MacMillan and Jonathan Sondow}
\date{}
\begin{document}
\baselineskip=1.4em
\begin{abstract}
Using elementary methods, we determine the highest power of $2$ that divides a power sum $1^n + 2^n + \dotsb + m^n$\!, generalizing Lengyel's formula for the special case where $m$ is itself a power of $2$. An application is a simple proof of Moree's result that, in any solution of the generalized Erd\H{o}s-Moser Diophantine equation
\begin{align*}
	1^n + 2^n + \dotsb + (m-1)^n = am^n\!,
\end{align*}
$m$ is odd.
\end{abstract}
\maketitle

%%%  SEC: INTRODUCTION
\section{Introduction} \label{SEC: introd}

For $p$ a prime and $k$ an integer, \emph{$v_p(k)$ denotes the highest exponent $v$ such that $p^v\mid k$}. (Here $a\mid b$ means $a$ divides $b$.) For example, $v_2(k)=0$ if and only if $k$ is odd, and $v_2(40)=3$.

For any \emph{power sum}
\begin{equation*}
	S_n(m) := \sum_{j=1}^m j^n =1^n + 2^n + \dotsb + m^n \qquad (m>0,\, n>0), %\label{EQ: power sum}
\end{equation*}
we determine $v_2\!\left(S_n(m)\right)$. As motivation, we first give a classical extension of the fact that $S_1(m) = m(m+1)/2$, a formula known to the ancient Greeks~\cite[Ch.~1]{Dickson} and famously \cite{hayes} derived by Gauss at age seven to calculate the sum
\begin{equation*}
	1 + 2 + \dotsb + 99 + 100 = (1+100) + (2+99) + \cdots + (50+51) = 5050.
\end{equation*}

\begin{prop} \label{THM: m(m+1)/2 | S_n(m)}
If $n>0$ is odd and $m>0$, then $m(m+1)/2$ divides $S_n(m)$.
\end{prop}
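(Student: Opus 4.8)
The plan is to use Gauss's pairing idea, reinforced by the elementary factorization that makes it work for higher odd powers. The engine is the identity
\begin{equation*}
	a^n + b^n = (a+b)\left(a^{n-1} - a^{n-2}b + \dotsb - ab^{n-2} + b^{n-1}\right),
\end{equation*}
valid whenever $n$ is odd, which shows that $a+b$ divides $a^n+b^n$. I would apply this twice — once to extract a factor of $m+1$ from $2S_n(m)$ and once to extract a factor of $m$ — and then combine the two divisibilities using $\gcd(m,m+1)=1$.

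First I would establish that $m+1$ divides $2S_n(m)$. Writing out two copies of the sum and reversing the index in the second,
\begin{equation*}
	2S_n(m) = \sum_{j=1}^m j^n + \sum_{j=1}^m (m+1-j)^n = \sum_{j=1}^m \left(j^n + (m+1-j)^n\right),
\end{equation*}
so that each summand is divisible by $j + (m+1-j) = m+1$ by the identity above. Hence $(m+1) \mid 2S_n(m)$.

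Next I would peel off the top term $m^n$ and pair the rest about $m/2$: since $S_n(m) = S_n(m-1) + m^n$, the same reindexing gives
\begin{equation*}
	2S_n(m-1) = \sum_{j=1}^{m-1}\left(j^n + (m-j)^n\right),
\end{equation*}
each summand divisible by $m$; and because $n>0$ forces $m \mid m^n$, we get $m \mid 2S_n(m)$ as well. As $m$ and $m+1$ are coprime, their product divides $2S_n(m)$, i.e. $2S_n(m) = c\,m(m+1)$ for some integer $c$, whence $S_n(m) = c\cdot m(m+1)/2$, and the claimed divisibility follows.

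I do not expect a serious obstacle here; the only point requiring care is the bookkeeping of the factor $2$. One must verify that dividing $2S_n(m)$ by $2$ legitimately transfers the divisibility to $S_n(m)$ — which it does precisely because $m(m+1)/2 \in \mathbb{Z}$, so that $2S_n(m) = c\,m(m+1)$ exhibits $S_n(m)$ as the integer multiple $c$ of $m(m+1)/2$. The oddness of $n$ is used essentially, since the factorization of $a^n+b^n$ fails for even $n$, consistent with the hypothesis.
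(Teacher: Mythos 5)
Your proof is correct, and it runs on the same engine as the paper's: reflect the summation index ($j \mapsto m+1-j$ or $j \mapsto m-j$), use the odd-exponent factorization $a+b \mid a^n+b^n$, and combine the two resulting divisibilities via $\gcd(m,m+1)=1$. The structural difference is in how the factor of $2$ is managed. The paper splits into cases according to the parity of $m$: in each case one of the two pairings is exact (the terms pair off with no doubling), so it yields divisibility of $S_n(m)$ itself by the odd member of $\{m, m+1\}$, while the other pairing is written as $\tfrac12\sum(\cdots)$ and yields divisibility by half the even member; coprimality then gives $m(m+1)/2 \mid S_n(m)$ with nothing left to check. You instead work uniformly with $2S_n(m)$, prove $m(m+1)\mid 2S_n(m)$ with no case distinction, and only at the end convert this to $m(m+1)/2 \mid S_n(m)$ using the integrality of $m(m+1)/2$. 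What your route buys is brevity and uniformity---one argument instead of two parallel ones; what the paper's route buys is that every divisibility statement concerns $S_n(m)$ directly, so no final transfer across a factor of $2$ is needed. That transfer is precisely where such arguments most often go astray (from $m(m+1)\mid 2S$ one cannot cancel the $2$ naively), and you justify it correctly: $2S_n(m)=c\,m(m+1)=2c\cdot\tfrac{m(m+1)}{2}$ forces $S_n(m)=c\cdot\tfrac{m(m+1)}{2}$, an integer multiple of the integer $m(m+1)/2$.
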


The proof is a modification of Lengyel's arguments in \cite{Lengyel} and \cite{Lengyel2}.

\begin{proof}[Proof of Proposition \ref{THM: m(m+1)/2 | S_n(m)}]
\emph{Case 1: both $n$ and $m$ odd}. Since $m$ is odd we may group the terms of $S_n(m)$ as follows, and as $n$ is also odd we see by expanding the binomial that
\begin{align*}
	S_n(m) = m^n + \sum_{j=1}^{(m-1)/2} \!\left(j^n + (m- j)^n\right)  \quad \Longrightarrow  \quad m \mid S_n(m).
\end{align*}
Similarly, grouping the terms in another way shows that
\begin{align*}
	S_n(m) = \frac12 \sum_{j=1}^m \! \left(j^n + \left((m+1)- j\right)^n\right)
		\quad \Longrightarrow \quad \frac{m+1}{2} \mid S_n(m).
\end{align*}
As $m$ and $m+1$ are relatively prime, it follows that $m(m+1)/2\mid S_n(m)$.\\
\emph{Case 2: $n$ odd and $m$ even}. Here
\begin{align*}
S_n(m) = \sum_{j=1}^{m/2} \!\left(j^n + ((m+1)- j)^n\right)  \quad \Longrightarrow  \quad (m+1) \mid S_n(m)
\end{align*}
and
\begin{align*}
	S_n(m) = \frac12\sum_{j=0}^m \! \left(j^n + \left(m- j\right)^n\right)
		\quad \Longrightarrow \quad \frac{m}{2} \mid S_n(m).
\end{align*}
Thus $m(m+1)/2\mid S_n(m)$ in this case, too.
\end{proof}

Here is a paraphrase of Lengyel's comments \cite{Lengyel} on Proposition \ref{THM: m(m+1)/2 | S_n(m)}:

\def\changemargin#1#2{\list{}{\rightmargin#2\leftmargin#1}\item[]}
\let\endchangemargin=\endlist 
\begin{changemargin}{0in}{0in}
\begin{quotation}
\noindent We note that Faulhaber had already known in 1631 (cf. \cite{Edwards}) that $S_n(m)$ can be expressed as a polynomial in $S_1(m)$ and $S_2(m)$, although with fractional coefficients. In fact, $S_n(m)/(2m+1)$ or $S_n(m)$ can be written as a polynomial in $m(m + 1)$ or $\left(m(m + 1)\right)^2$, if $n$ is even or $n\ge3$ is odd, respectively.
\end{quotation}
\end{changemargin}

Proposition \ref{THM: m(m+1)/2 | S_n(m)} implies that if $n$ is odd, then
\begin{equation*}
	v_p(S_n(m)) \ge v_p(m(m+1)/2),
\end{equation*}
for any prime $p$. When $p=2$, Theorem~\ref{THM: Sigma(2^d)} shows that the inequality is strict for odd $n>1$.

\begin{thm} \label{THM: Sigma(2^d)}
Given any positive integers $m$ and $n$, the following divisibility formula holds:
\begin{equation} \label{EQ: main}
\begin{aligned}
	v_2\!\left(S_n(m)\right) =
		\begin{cases}
			\, v_2(m(m+1)/2) &\text{if $n=1$ or $n$ is even,}  \\[0.4em]
			\, 2v_2(m(m+1)/2) &\text{if $n\ge3$ is odd.}
		\end{cases}
\end{aligned} 
\end{equation} 
\end{thm}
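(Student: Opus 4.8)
The plan is to prove the two cases of \eqref{EQ: main} by a pair of interlocking inductions driven by a single algebraic device: expanding each power sum about its \emph{midpoint}, so that all odd-degree contributions cancel by symmetry. Throughout write $d := v_2(m(m+1)/2)$, the quantity on the right of \eqref{EQ: main}. Two base cases are immediate: $S_1(m) = m(m+1)/2$ gives $v_2(S_1(m)) = d$, and the classical identity $S_3(m) = (m(m+1)/2)^2$ gives $v_2(S_3(m)) = 2d$. Proposition~\ref{THM: m(m+1)/2 | S_n(m)} already supplies the lower bound $v_2(S_n(m)) \ge d$ for odd $n$; the real content of Theorem~\ref{THM: Sigma(2^d)} is to pin down the exact value, and in particular the doubling for odd $n \ge 3$.

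First I would reduce to $m$ odd. If $m$ is even then $m-1$ is odd, $v_2\bigl((m-1)m/2\bigr) = v_2(m(m+1)/2) = d$, and $S_n(m) = S_n(m-1) + m^n$ with $v_2(m^n) = n(d+1)$ exceeding both $d$ and $2d$; hence $v_2(S_n(m)) = v_2(S_n(m-1))$ and it suffices to treat odd arguments. For $m$ odd, set $M := (m+1)/2$, so that $d = v_2(M)$, and substitute $j = M + y$. As $y$ runs symmetrically over $-(M-1), \dots, M-1$, the odd-index sums $Q_k := \sum_y y^k$ vanish for $k$ odd, leaving
\begin{equation*}
	S_n(m) = \sum_{\substack{k=0\\ k \text{ even}}}^{n} \binom{n}{k} M^{\,n-k} Q_k, \qquad Q_0 = 2M-1, \quad Q_k = 2\,S_k(M-1)\ \ (k \ge 2).
\end{equation*}
The value of this expansion is that the summand of lowest $2$-adic valuation is isolated and feeds a clean recursion.

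For $n$ even I would induct on the argument $M$. The lowest-order term is $k = n$, namely $Q_n = 2\,S_n(M-1)$; since $M-1$ is odd its $d$-value is $d-1$, so by induction $v_2(S_n(M-1)) = d-1$ and this term has valuation exactly $d$. Every other term carries $M^{\,n-k}$ with $n-k \ge 2$, hence valuation $\ge 2d > d$, and so cannot interfere, giving $v_2(S_n(m)) = d$; the base case $d = 0$ is trivial, as then $M$ is odd and $S_n(m)$ is a sum of $M$ odd terms, hence odd. For $n \ge 3$ odd I would then invoke the even case just established: the lowest-order term is now $k = n-1$, equal to $n(m+1)\,S_{n-1}(M-1)$, of valuation $v_2(m+1) + v_2\bigl(S_{n-1}(M-1)\bigr) = (d+1) + (d-1) = 2d$, while all remaining terms carry $M^{\,n-k}$ with $n-k \ge 3$ and thus valuation $\ge 3d > 2d$. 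Hence $v_2(S_n(m)) = 2d$.

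The main obstacle is the domination step: verifying in each expansion that the single lowest-order term strictly beats all others. This is exactly what distinguishes the regimes of \eqref{EQ: main}, since the extra factor of valuation $d$ in the odd case comes from $Q_{n-1} = 2\,S_{n-1}(M-1)$, which requires $n-1 \ge 2$, i.e.\ $n \ge 3$; for $n=1$ the relevant factor is the odd number $Q_0 = 2M-1$, recovering $v_2 = d$. Domination relies on $d \ge 1$, so that the spare powers of $M$ in the higher terms genuinely raise the valuation, and the degenerate case $d = 0$—where the two regimes coincide—must be checked by hand. Organizing the two inductions, on $n$ to descend from odd exponents to even ones and on $M$ within the even case, together with the parity reduction for even $m$, is the bookkeeping that completes the argument.
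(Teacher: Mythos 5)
Your proof is correct, but it runs the paper's machinery in mirror image, so a comparison is worth recording. Both arguments rest on the same device: expand the power sum symmetrically about a midpoint so that all odd powers cancel, isolate a unique term of smallest $2$-adic valuation, and induct along a halving recursion, finishing with a one-step parity shift. The paper anchors everything at \emph{even} arguments: its lemma expands
$S_n(2a) = a^n + 2\sum_{i}\binom{n}{2i}a^{n-2i}S_{2i}(a)$,
inducts on $d$ for $m = 2^dq$ with $q$ odd, and then treats odd $m$ by the shift $S_n(m) = S_n(m+1) - (m+1)^n$. You anchor at \emph{odd} arguments: expanding about the integer midpoint $M = (m+1)/2$ gives
$S_n(m) = \sum_{k \text{ even}} \binom{n}{k}M^{n-k}Q_k$ with $Q_k = 2S_k(M-1)$,
your recursion $m \mapsto M-1$ stays inside the odd integers whenever $d \ge 1$, and even $m$ is dispatched by $S_n(m) = S_n(m-1) + m^n$. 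Your logical ordering is sound and non-circular: the even-$n$ case for odd arguments is self-contained, the odd-$n$ case invokes the even-$n$ case only at the smaller odd argument $M-1$, and the parity reduction comes last; moreover your base case $d=0$ (an odd number of odd summands) plays exactly the role of the paper's base case $d=1$ (exactly $q$ odd summands). As for what each buys: the paper's formulation produces a lemma, $v_2(S_n(2^dq)) = d-1$ or $2(d-1)$, that visibly generalizes Lengyel's theorem for $m=2^d$ and applies directly in the Erd\H{o}s--Moser argument (where $m$ is assumed even), while yours makes the symmetric cancellation cleanest, since an odd-length sum has a genuine integer midpoint and no leftover middle term to carry along. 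Two harmless imprecisions to fix in a final write-up: for $n=1$ the valuation $v_2(m^n)=d+1$ need not exceed $2d$, but there the target is only $d$ (and you treat $n=1$ by the closed formula anyway); and ``a sum of $M$ odd terms'' should say that exactly $M$ of the $m$ summands are odd, whence $S_n(m) \equiv M \equiv 1 \pmod{2}$.
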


The elementary proof given in Section~\ref{SEC: Proof THM: Sigma(2^d)} uses a lemma proved by induction.

In the special case where $m$ is a power of $2$, formula \eqref{EQ: main} is due to Lengyel \cite[Theorem~1]{Lengyel}. His complicated proof, which uses Stirling numbers of the second kind and von Staudt's theorem on Bernoulli numbers, is designed to be generalized. Indeed, for $m$ a power of an odd prime $p$, Lengyel proves results towards a formula for $v_p\!\left(S_n(m)\right)$ in \cite[Theorems~3,\,4,\,5]{Lengyel}.

In the next section, we apply formula \eqref{EQ: main} to a certain Diophantine equation.

%%%  SEC: EME & GEME
\section{Equations of Erd\H{o}s-Moser Type} \label{SEC: EME&GEME}

As an application of Theorem~\ref{THM: Sigma(2^d)}, we give a simple proof of a special case of a result due to Moree. Before stating it, we discuss a conjecture made by Erd\H{o}s and Moser \cite{Moser} around \osn{1953}.

\begin{cnj}[Erd\H{os}-Moser] \label{CNJ: EMC}
The only solution of the Diophantine equation
\begin{equation*}
	1^n + 2^n + \dotsb + (m-1)^n = m^n
\end{equation*}
is the trivial solution \mbox{$1 + 2 = 3$}.
\end{cnj}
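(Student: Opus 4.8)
The uniqueness of $1+2=3$ is the famous Erd\H{o}s--Moser conjecture, which remains open; what follows is therefore a program aimed at it, with the step that nobody has been able to complete flagged explicitly. The natural first move is to recast the equation in the power-sum language of the previous section: writing $1^n+\dotsb+(m-1)^n = m^n$ as $S_n(m-1)=m^n$ and adding $m^n$ to both sides turns it into the symmetric form $S_n(m) = 2m^n$, which is the object I would study for a hypothetical nontrivial solution $(m,n)$.

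The first payoff comes straight from Theorem~\ref{THM: Sigma(2^d)}. Taking $v_2$ of $S_n(m)=2m^n$ gives $v_2(S_n(m)) = 1 + n\,v_2(m)$, and matching this against the theorem's two cases is remarkably restrictive: a short $2$-adic computation rules out even $m$ outright, rules out odd exponents $n\ge 3$ outright, and collapses the exponent $n=1$ to the trivial $m=3$, so that any surviving nontrivial solution must have $n$ even and in fact $m \equiv 3 \pmod 8$. With the parity of $n$ and a congruence on $m$ secured, the next step is to bring in primes $p$ with $p-1 \mid n$: for these, Fermat's little theorem gives $\sum_{j=1}^{p-1} j^n \equiv -1 \pmod p$, and combining this with the Faulhaber expansion of $S_n(m)$ in Bernoulli numbers and the von Staudt--Clausen theorem on their denominators yields divisibility relations tying each such $p$ to the factors of $m-1$, $m+1$, $2m-1$ and $2m+1$. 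Assembling these relations over all admissible $p$---the engine of Moser's original analysis---forces any nontrivial $m$ to exceed an astronomical bound.

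The hard part, and the reason the conjecture is still open, is the final step: converting this infinite supply of necessary conditions into an outright impossibility for every even $n\ge 2$. The congruence machinery produces only lower bounds on $m$, never an upper bound, so no finite search can finish the job, and the interlocking Bernoulli-number and divisibility conditions---stringent as they are---have never been shown to be jointly unsatisfiable. Bridging that gap, turning ``$m$ must be enormous and highly constrained'' into ``$m$ cannot exist,'' is precisely the obstacle I expect to be unable to overcome, and it is where any genuine resolution of the conjecture must ultimately reside.
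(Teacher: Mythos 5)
You were asked to ``prove'' a statement that the paper itself labels a conjecture: the Erd\H{o}s--Moser problem is open, the paper offers no proof of it (it only cites the partial results of Moser and Schinzel), and your proposal is correctly framed as a program with an explicitly flagged gap rather than as a proof. Your $2$-adic opening is sound, and it is worth recording how much it gives. From $S_n(m)=2m^n$ one gets $v_2(S_n(m))=1+n\,v_2(m)$, and matching against Theorem~\ref{THM: Sigma(2^d)} works exactly as you claim: if $m=2^dq$ is even, the theorem forces $1+nd=d-1$ or $1+nd=2(d-1)$, i.e.\ $nd=d-2$ or $nd=2d-3$, both impossible for $n\ge1$, resp.\ $n\ge3$; if $m$ is odd and $n\ge3$ is odd, then $v_2(S_n(m))=2(d-1)$ (with $m+1=2^dq$) is even but must equal $1$; if $m$ is odd and $n$ is even, then $d-1=1$ forces $m+1\equiv 4\pmod 8$, i.e.\ $m\equiv 3\pmod 8$; and $n=1$ gives $m(m+1)/2=2m$, so $m=3$. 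Thus you recover, by an elementary route, precisely the two results the paper attributes to Moser (no solutions with odd exponent $n>1$) and to Schinzel ($m$ odd in any solution), plus the classical congruence $m\equiv3\pmod 8$. Your even-$m$ exclusion is in fact the same argument as the paper's proof of Proposition~\ref{THM: GEMC} specialized to $a=1$, and your recovery of Moser's odd-exponent theorem parallels the alternate proof the paper cites in~\cite{MS}.

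The gap you flag is the genuine one, and your diagnosis of it is accurate: the machinery of primes $p$ with $p-1\mid n$, Fermat's little theorem, and von Staudt--Clausen produces only necessary conditions and lower bounds (Moser's original analysis already gives $m>10^{10^6}$ along these lines), and no one has converted these interlocking constraints into an outright contradiction for even $n\ge2$. So nothing in your write-up is mathematically wrong, but nothing in it --- or in the paper --- constitutes a proof of Conjecture~\ref{CNJ: EMC}; your explicit refusal to claim otherwise is the correct assessment, and the honest partial results you do establish are exactly the state of the art that the surrounding sections of the paper describe.
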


Moser proved, among many other things, that \emph{Conjecture \ref{CNJ: EMC} is true for odd exponents $n$}. (An alternate proof is given in \cite[Corollary~1]{MS}.) In \osn{1987} Schinzel showed that \emph{in any solution, $m$ is odd}~\cite[p.~800]{MoreeEtAl}. For surveys of results on the problem, see \cite[Section~D7]{Guy}, \cite{Moree}, \cite{MoreeOnMoser}, and \cite{MoreeEtAl}.

In \osn{1996} Moree generalized Conjecture \ref{CNJ: EMC}.
\begin{cnj}[Moree] \label{CNJ: GEMC}
The only solution of the generalized \EMDE
\begin{equation}
	1^n + 2^n + \dotsb + (m-1)^n = am^n  \label{EQ: GEMC}
\end{equation}
is the trivial solution $1 + 2 + \dotsb + 2a = a(2a+1)$.
\end{cnj}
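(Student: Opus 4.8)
The plan is to rewrite the generalized equation in a form to which Theorem~\ref{THM: Sigma(2^d)} applies, dispose of the exponent $n=1$, and then see how far the resulting arithmetic constraints can be pushed. Adding $m^n$ to both sides of \eqref{EQ: GEMC} turns it into
\[
	S_n(m) = (a+1)\,m^n,
\]
so every solution forces $v_2\!\left(S_n(m)\right) = v_2(a+1) + n\,v_2(m)$. When $n=1$ this reads $m(m+1)/2 = (a+1)m$, giving $m+1 = 2(a+1)$, \ie $m = 2a+1$; this is exactly the stated trivial solution, and it is the unique one for $n=1$. Hence the entire content of the conjecture is the claim that \eqref{EQ: GEMC} has \emph{no} solution with $n \ge 2$, and this is what the remaining steps must establish.

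First I would extract every restriction the $2$-adic formula yields. In any solution with $n\ge2$ the modulus $m$ is odd: were $m$ even, then by \eqref{EQ: main} the left side of $S_n(m)=(a+1)m^n$ would have $2$-adic valuation at most $2v_2(m)-2$, whereas the right side has valuation at least $n\,v_2(m)\ge 2v_2(m)$, a contradiction. With $m$ odd we have $v_2(m(m+1)/2)=v_2(m+1)-1$, and substituting into \eqref{EQ: main} pins down $v_2(a+1)$ exactly in terms of $v_2(m+1)$ and the parity of $n$. This is a genuine constraint on the pair $(a,m)$, but it plainly cannot rule out solutions by itself, so the $2$-adic data must be supplemented by congruences at the odd primes.

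The natural continuation is the classical Moser \cite{Moser} machinery, now carried through for general $a$. Expressing $S_n(m-1)$ through Bernoulli numbers and invoking the von Staudt--Clausen and Kummer congruences, one reduces \eqref{EQ: GEMC} modulo each prime $p\le m+1$: the sum $1^n+\dotsb+(m-1)^n$ is congruent mod $p$ to $-1$ or to $0$ according as $p-1\mid n$ or not, and matching this against $am^n$ forces simultaneous congruence conditions on $m$. Chaining these conditions, as Moser did in the case $a=1$, should show that any solution with $n\ge2$ requires $m$ to be divisible by a large product of primes and hence to be astronomically large, while the elementary size estimate $S_n(m-1)/m^n \approx 1/(e^{n/m}-1)$ ties $m$ and $n$ together. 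The hard part---and the reason the statement remains a conjecture rather than a theorem---is that these two lines of attack only \emph{confine} a hypothetical solution to an enormous range rather than contradicting its existence; the $2$-adic formula of the present paper delivers only the parity of $m$, and closing the gap for every $n\ge2$ and every $a$ appears to require an essentially new idea.
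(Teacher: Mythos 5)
You have attempted to prove Conjecture~\ref{CNJ: GEMC}, which is an \emph{open conjecture}: the paper does not prove it and does not claim to, so no complete proof can exist in the proposal and none does. To your credit, you recognize this explicitly at the end. What your argument actually establishes rigorously is precisely the content of the paper's Proposition~\ref{THM: GEMC} (Moree's result that $m$ is odd), and there your reasoning matches the paper's proof of that proposition essentially step for step: rewrite \eqref{EQ: GEMC} as $S_n(m)=(a+1)m^n$, handle $n=1$ via \eqref{EQ: m(m-1)/2} to get $m=2a+1$, and for $n\ge2$ with $m$ even compare $v_2\!\left(S_n(m)\right)\le 2\bigl(v_2(m)-1\bigr)$ from \eqref{EQ: main} against $v_2\!\left((a+1)m^n\right)\ge n\,v_2(m)$ to reach a contradiction. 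That portion is correct, and your observation that the $2$-adic formula also pins down $v_2(a+1)$ in terms of $v_2(m+1)$ for odd $m$ is a valid (if unexploited) extra constraint.

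The genuine gap is everything after that, and it is not a repairable one: the Moser--Moree congruence machinery you sketch (von Staudt--Clausen, reductions modulo primes $p$ with $p-1\mid n$ or not, chaining divisibility conditions, plus the size estimate $S_n(m-1)/m^n\approx 1/(e^{n/m}-1)$) is known to yield only lower bounds on $m$ --- it confines a hypothetical solution to a huge range but never produces a contradiction, which is exactly why the statement remains Conjecture~\ref{CNJ: GEMC} rather than a theorem. One technical caution within the sketch itself: the congruence $1^n+\dotsb+(m-1)^n\equiv -1$ or $0 \pmod{p}$ according as $p-1\mid n$ or not is \emph{not} valid for arbitrary primes $p\le m+1$; it requires control of $m$ modulo $p$ (the clean form holds when $p\mid m$, which is why Moser applies it to primes dividing $m\pm1$ and $2m\pm1$), and Moree's general-$a$ version \cite[Proposition~9]{Moree} is the statement that $p\mid m$ forces $p-1\nmid n$ --- of which the paper's Proposition~\ref{THM: GEMC} is the case $p=2$. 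In short: your proposal correctly reproves the one theorem the paper extracts from this conjecture, correctly identifies $n\ge2$ as the open content, but does not (and cannot, by present knowledge) close it.
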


Actually, Moree~\cite[p.~290]{Moree} conjectured that \emph{equation~\eqref{EQ: GEMC} has no integer solution with $n>1$}. The equivalence to Conjecture \ref{CNJ: GEMC} follows from the formula
\begin{equation}
	1 + 2 + \dotsb + k= \frac12 k(k+1) \label{EQ: m(m-1)/2}
\end{equation}
with $k=m-1$.

Generalizing Moser's result on Conjecture \ref{CNJ: EMC}, Moree~\cite[Proposition~3]{Moree} proved that \emph{Conjecture \ref{CNJ: GEMC} is true for odd exponents $n$}. He also proved a generalization of Schinzel's result.

\begin{prop}[Moree] \label{THM: GEMC}
If equation \eqref{EQ: GEMC} holds, then $m$ is odd.
\end{prop}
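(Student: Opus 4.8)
The plan is to argue by contradiction, comparing the $2$-adic valuations of the two sides of equation \eqref{EQ: GEMC} and invoking Theorem~\ref{THM: Sigma(2^d)}. The first observation is that the left-hand side is exactly $S_n(m-1)$, so \eqref{EQ: GEMC} reads $S_n(m-1) = am^n$. Since equal positive integers have equal valuations, it suffices to show that $v_2(S_n(m-1))$ and $v_2(am^n)$ cannot agree when $m$ is even; this forces $m$ to be odd.

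So suppose $m$ is even and set $e := v_2(m) \ge 1$. First I would evaluate the left-hand side. Applying Theorem~\ref{THM: Sigma(2^d)} with $m$ replaced by $m-1$ requires the quantity $v_2\bigl((m-1)m/2\bigr)$; because $m-1$ is odd, this equals $v_2(m)-1 = e-1$. Hence $v_2(S_n(m-1)) = e-1$ when $n=1$ or $n$ is even, while $v_2(S_n(m-1)) = 2(e-1)$ when $n \ge 3$ is odd.

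Next I would bound the right-hand side from below: since $a$ is a positive integer, $v_2(am^n) = v_2(a) + n\,v_2(m) \ge ne$. It then remains to check that this lower bound strictly exceeds the left-hand valuation in each case. For $n=1$ or $n$ even, we have $ne \ge e > e-1$; for $n \ge 3$ odd, we have $ne \ge 3e > 2e-2 = 2(e-1)$, using $e \ge 1$. In every case $v_2(am^n) > v_2(S_n(m-1))$, contradicting the equality of the two sides. Therefore $m$ must be odd.

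I expect the only delicate point to be the odd case $n \ge 3$, where Theorem~\ref{THM: Sigma(2^d)} \emph{doubles} the left-hand exponent to $2(e-1)$: this is the ``dangerous'' direction, since doubling pushes the left-hand valuation closer to the right-hand one. The crux is to confirm that the factor $n \ge 3$ multiplying $v_2(m)$ on the right still dominates, which holds precisely because $3e > 2e-2$ for all $e \ge 1$. Everything else is routine bookkeeping with valuations, and no genuine obstacle remains once the substitution $m \mapsto m-1$ in Theorem~\ref{THM: Sigma(2^d)} and the nonnegativity $v_2(a) \ge 0$ are recorded.
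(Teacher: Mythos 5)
Your proof is correct, and it runs on the same engine as the paper's: Theorem~\ref{THM: Sigma(2^d)} combined with a comparison of $2$-adic valuations of the two sides of \eqref{EQ: GEMC}. The difference is in the bookkeeping, and it is worth noting. The paper first adds $m^n$ to both sides, rewriting \eqref{EQ: GEMC} as $S_n(m) = (a+1)m^n$, so that Theorem~\ref{THM: Sigma(2^d)} is applied at the \emph{even} argument $m = 2^d q$; it then uses the single uniform bound $v_2(S_n(m)) \le 2(d-1)$ against $v_2\bigl((a+1)m^n\bigr) \ge nd$, an inequality chain that contradicts only $n>1$, which is why the paper must dispose of $n=1$ separately via the closed form \eqref{EQ: m(m-1)/2}. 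You instead apply the theorem at the \emph{odd} argument $m-1$ (legitimate, since the theorem holds for every positive integer, and $m-1 \ge 1$ because an even $m$ is at least $2$), where the relevant quantity $v_2\bigl((m-1)m/2\bigr) = e-1$ comes out the same as in the paper. The payoff of your route is uniformity: the comparison $v_2(am^n) \ge ne > v_2\bigl(S_n(m-1)\bigr)$ succeeds in every case, including $n=1$ (where $e > e-1$), so no separate case is needed and the equation is used exactly as given; the cost is that you carry the theorem's case split ($n=1$ or even, versus $n \ge 3$ odd) through the final comparison instead of collapsing it into one inequality as the paper does. Both arguments are equally elementary and of essentially the same length, so this is a matter of taste rather than of substance.
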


In fact, Moree~\cite[Proposition~9]{Moree} (see also \cite{MoreeOnMoser}) showed more generally that \emph{if \eqref{EQ: GEMC} holds and a~prime~$p$ divides~$m$, then $p-1$ does not divide~$n$}. (The case $p=2$ is Proposition~\ref{THM: GEMC}.) His proof uses a congruence which he says \cite[p.~283]{Moree} can be derived from either the von Staudt-Clausen theorem, the theory of finite differences, or the theory of primitive roots.

We apply Theorem~\ref{THM: Sigma(2^d)} to give an elementary proof of Proposition~\ref{THM: GEMC}.

\begin{proof}[Proof of Proposition~\ref{THM: GEMC}.]
If $n=1$, then \eqref{EQ: GEMC} and \eqref{EQ: m(m-1)/2} show that $m=2a+1$ is odd.

If $n>1$ and $m$ is even, set $d:=v_2(m)=v_2(m(m+1))$. Theorem~\ref{THM: Sigma(2^d)} implies $v_2(S_n(m)) \le 2(d-1)$, and~\eqref{EQ: GEMC} yields $S_n(m) = S_n(m-1) + m^n = (a+1)m^n.$ But then $nd \le v_2\!\left(S_n(m)\right) \le 2(d-1)$, contradicting $n>1$. Hence $m$ is odd.
\end{proof}

%%%  SEC: PROOF OF THEOREM 1
\section{Proof of Theorem~\ref{THM: Sigma(2^d)}} \label{SEC: Proof THM: Sigma(2^d)}

The heart of the proof of the divisibility formula is the following lemma.

\begin{lem}
Given any positive integers $n,d,$ and $q$ with $q$ odd, we have
\begin{equation}
\begin{aligned}
	v_2\!\left(S_n(2^dq)\right) =
		\begin{cases}
			\, d-1 &\text{if $n=1$ or $n$ is even,}  \\[0.4em]
			\, 2(d-1) &\text{if $n\ge3$ is odd.}
		\end{cases} \label{EQ: m=2^dq}
\end{aligned} 
\end{equation}
\end{lem}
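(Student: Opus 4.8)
The plan is to induct on $d$, writing $m = 2^d q$ and $N = 2^{d-1}q$ and splitting $S_n(m)$ according to the parity of the summation index. The even-indexed terms contribute $\sum_{i=1}^{N}(2i)^n = 2^n S_n(N)$, whose $2$-adic valuation is controlled by the lemma at level $d-1$ (the inductive hypothesis, available once $d\ge 2$); the odd-indexed terms form $P_n(d) := \sum_{\substack{1\le j\le m\\ j\ \mathrm{odd}}} j^n$, and essentially all the difficulty lies in finding $v_2\!\left(P_n(d)\right)$. For the base case $d=1$ I would observe that among $1,\dots,2q$ the $q$ even terms sum to an even number while the $q$ odd terms sum to an odd number (as $q$ is odd), so $v_2\!\left(S_n(2q)\right)=0$, agreeing with both branches of \eqref{EQ: m=2^dq} since $d-1=2(d-1)=0$.

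To compute $v_2\!\left(P_n(d)\right)$ for $d\ge 2$ (so that $N$ is even), I would pair each odd $j$ with the odd number $m-j$, a pairing with no fixed point, and expand $j^n+(m-j)^n$ by the binomial theorem. Because $m=2^d q$, every term carrying a factor $m^2$ has valuation $\ge 2d$; the constant term cancels when $n$ is odd and doubles when $n$ is even, while the next term is $\pm\, n\,m\,j^{n-1}$. Summing over pairs therefore collapses $P_n(d)$, modulo $2^{2d}$, to $n\,m\,P_{n-1}(d-1)$ when $n$ is odd and to $2P_n(d-1)-n\,m\,P_{n-1}(d-1)$ when $n$ is even. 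Thus $v_2\!\left(P_n(d)\right)$ is itself best obtained by induction on $d$, the outcome being $v_2\!\left(P_n(d)\right)=d-1$ for even $n$ and $v_2\!\left(P_n(d)\right)=2(d-1)$ for odd $n$ (the case $n=1$ given outright by $P_1(d)=\left(2^{d-1}q\right)^2$).

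Granting this, the lemma follows by comparing the two parts through the inductive hypothesis. The even part has valuation $n+v_2\!\left(S_n(N)\right)$, which equals $d-1$ when $n=1$, is at least $d$ when $n$ is even, and is at least $2d-1$ when $n\ge 3$ is odd. Setting these against $v_2\!\left(P_n(d)\right)$, one finds that for $n=1$ the even part is smaller, while for even $n$ and for odd $n\ge 3$ the odd part is smaller, and in every case the two valuations are distinct once $d\ge 2$. Hence no cancellation occurs, $v_2\!\left(S_n(m)\right)$ equals the minimum of the two, and this minimum is exactly the value claimed in \eqref{EQ: m=2^dq}.

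The main obstacle is the odd case $n\ge 3$. A direct term-by-term expansion is unworkable there: the two lowest contributions share the valuation $d-1$ and cancel, after which several further terms of comparable valuation interfere, so the answer cannot simply be read off. The pairing argument sidesteps this by retaining the single linear term $n\,m\,P_{n-1}(d-1)$, and the exact value $2(d-1)$ then rests on two facts: $n$ is odd, so $v_2(n)=0$, and the even-power odd part $P_{n-1}(d-1)$ has valuation exactly $(d-1)-1=d-2$, giving $v_2\!\left(n\,m\,P_{n-1}(d-1)\right)=0+d+(d-2)=2(d-1)$. The two points demanding the most care are verifying that the $m^2$-tail is genuinely negligible, i.e.\ has valuation $\ge 2d>2(d-1)$, and establishing the exactness of the even-power odd-part valuation on which the odd case feeds.
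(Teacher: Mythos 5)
Your proposal is correct, but it takes a genuinely different route from the paper's. You split $S_n(2^dq)$ by the parity of the summation index into $2^nS_n(2^{d-1}q)$ plus the odd-indexed sum $P_n(d)$, analyze $P_n(d)$ by its own induction on $d$ using the fixed-point-free pairing $j \leftrightarrow m-j$ of odd indices with the binomial expansion truncated modulo $m^2$, and then finish with a min-of-valuations comparison of the two parts. The paper never splits by parity: it pairs the full sum around the midpoint $a=2^dq$ to get the exact identity
\begin{equation*}
	S_n(2a) = a^n + 2\sum_{i=0}^{\lfloor n/2 \rfloor}\binom{n}{2i} a^{n-2i}S_{2i}(a),
\end{equation*}
extracts the dominant term ($2S_n(a)$ for even $n$, $2naS_{n-1}(a)$ for odd $n\ge3$), and bounds the remaining tail crudely by powers of $2$; this keeps the whole argument inside the single family $S_n$, with one induction on $d$ and no cancellation analysis. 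The underlying mechanism is the same in both arguments: the odd-exponent case feeds on the \emph{exact} valuation of an even-exponent sum one level down, the factor $nm$ with $n$ and $q$ odd forces the value $2(d-1)$, and everything is anchored by the observation that a sum of $q$ odd terms is odd. What your version buys is early truncation --- working modulo $2^{2d}$ kills everything past the linear term in $m$, so the dominant term is visible immediately; what it costs is the auxiliary family $P_n(d)$, a second induction (which, as you implicitly arrange, must carry all exponents $n$ simultaneously, since $P_n(d)$ calls on $P_{n-1}(d-1)$), and the final verification that the two parts always have distinct valuations. Your two flagged danger points are both handled soundly: the $m^2$-tail has valuation at least $2d > 2(d-1)$, and the exactness of $v_2\!\left(P_{n-1}(d-1)\right)=d-2$ is precisely what the all-$n$ induction hypothesis supplies.
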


\begin{proof}
We induct on $d$. Since the power sum for $S_n(2q)$ has exactly $q$ odd terms, we have $v_2(S_n(2q)) = 0$, and so \eqref{EQ: m=2^dq} holds for $d=1$. By \eqref{EQ: m(m-1)/2} with $k=2^dq$, it also holds for all $d\ge1$ when $n=1$. Now assume inductively that \eqref{EQ: m=2^dq} is true for fixed $d \ge 1$.

Given a positive integer $a$, we can write the power sum $S_n(2a)$ as
\begin{align*}
	S_n(2a) = a^n + \sum_{j=1}^a \left((a - j)^n + (a + j)^n\right)
		&= a^n + 2 \sum_{j=1}^a \sum_{i=0}^{\lfloor n/2 \rfloor} \binom{n}{2i} a^{n-2i} j^{2i}\\
	         &= a^n + 2\hspace{-0.25em}\sum_{i=0}^{\lfloor n/2 \rfloor}
		\binom{n}{2i} a^{n-2i}S_{2i}(a).
\end{align*}

If $n \ge 2$ is even, we extract the last term of the summation, set $a=2^dq$, and write the result as
\begin{equation*}
	S_n(2^{d+1}q) = 2^{nd}q^n + 2^d\frac{S_n(2^dq)}{2^{d-1}}
			 +2^{2d+1}\!\sum_{i=0}^{(n-2)/2}\!\binom{n}{2i} 2^{d(n-2i-2)}q^{n-2i} S_{2i}(2^dq).
\end{equation*}
By the induction hypothesis, the fraction is actually an odd integer. Since $nd > d$, we conclude that $v_2\!\left(S_n(2^{d+1}q)\right) = d$, as desired.

Similarly, if $n \ge 3$ is odd, then
\begin{equation*}
	S_n(2^{d+1}q)\!=\!2^{nd}q^n + 2^{2d}nq\frac{S_{n-1}(2^dq)}{2^{d-1}}
			+ 2^{3d+1}\!\!\sum_{i=0}^{(n-3)/2}\!\! \binom{n}{2i} 2^{d(n-2i-3)}q^{n-2i} S_{2i}(2^dq).
\end{equation*}
Again by induction, the fraction is an odd integer. Since $nd > 2d$, and $n$ and $q$ are odd, we see that $v_2\!\left(S_n(2^{d+1}q)\right) = 2d$, as required. This completes the proof of the lemma.
\end{proof}

\begin{proof}[Proof of Theorem~\ref{THM: Sigma(2^d)}]
When $m$ is even, write $m=2^dq$, where $d\ge1$ and $q$ is odd. Then $v_2(m(m+1)/2)=d-1$, and \eqref{EQ: m=2^dq} implies \eqref{EQ: main}.

If $m$ is odd, set $m+1=2^dq$, with $d\ge1$ and $q$ odd. Again we have $v_2(m(m+1)/2)=d-1$. From \eqref{EQ: m(m-1)/2} with $k=m$ we get $v_2(S_1(m))=d-1$, so that \eqref{EQ: main} holds for $n=1$. If $n>1$, then $nd > 2(d-1) \ge d-1$, and so \eqref{EQ: m=2^dq} and the relations$$S_n(m) = S_n(m+1)-(m+1)^n  \equiv S_n(m+1) \pmod{2^{nd}}$$ imply $v_2(S_n(m))=v_2(S_n(m+1))$ and, hence, \eqref{EQ: main}. This proves the theorem.
\end{proof}

\paragraph{Acknowledgments.} The authors are grateful to Tamas Lengyel and Pieter Moree for valuable comments and suggestions.

%%%  BIBLIOGRAPHY

{
\setlength{\parindent}{0cm}
\small
55 Lessard Avenue\par Toronto, Ontario, Canada M6S 1X6\par email: {\tt kieren@alumni.rice.edu}\par
\ \par
209 West 97th Street\par New York, NY 10025, USA\par email: {\tt jsondow@alumni.princeton.edu}\par }

\end{document}